\theoremstyle{plain}
\numberwithin{equation}{section}
\newtheorem{thm}{Theorem}[section]
\newtheorem{proposition}[thm]{Proposition}
\newtheorem{construction}[thm]{Construction}
\newtheorem{problem}[thm]{Problem}
\begin{document}

\title{A Conic Section Problem Involving the Maximum Generalized Golden Right Triangle}
\author{Jun Li}
\address{
School of Science\\
Jiangxi University of Science and Technology\\ Ganzhou\\
341000\\
China.
}
\email{junli323@163.com}

\begin{abstract}
An interesting conic section problem involving the maximum generalized golden right triangle $T_2$ is solved, and two simple constructions of $T_2$ are shown.
\end{abstract}

\date{2016.6}
\maketitle

\section{Introduction}\label{intro}
As the great astronomer Johannes Kepler stated,``Geometry has two great treasures: one is the theorem of Pythagoras; the other, the division of a line into extreme and mean ratio. The first we may compare to a measure of gold; the second we may name a precious jewel''\cite[p. 160]{cte1}.

\section{A conic section problem}
\label{sec:1}
\begin{figure}[ht]
\centering
\includegraphics[width=1\textwidth]{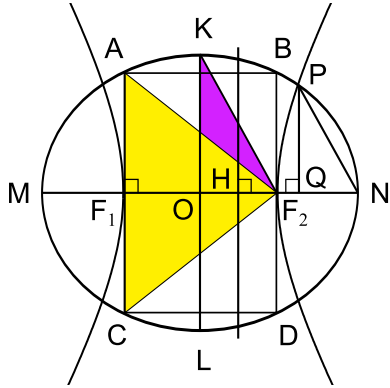}
\caption{An interesting conic section problem}
\label{fig:fg1}
\end{figure}

Let's consider an interesting problem involving an ellipse and a hyperbola in Figure \ref{fig:fg1}. First, we construct an ellipse $\frac{x^2}{a^2} + \frac{y^2}{b^2} = 1$ and a hyperbola
$\frac{x^2}{c^2} - \frac{y^2}{b^2} = 1$, where $a$ is the semi-major axis, $b$ is the semi-minor axis of the ellipse, and $a^2=b^2+c^2$, such that, the eccentricity $e_1$ of the ellipse and $e_2$ of the hyperbola satisfy the condition $e_1 e_2 = 1$, and the foci of the ellipse becomes the corresponding vertex of the hyperbola, next, let $F_1$ and $F_2$ denote the foci of the ellipse, $KL$ the minor axis, $MN$ the major axis, and $O$ the origin, without loss of generality, we set $c=OF_2=1$.

Then, let $P$ be the top-right intersection point of the ellipse and the hyperbola,
construct a segment $PQ$ perpendicular to $ON$ and intersecting $ON$ at the foot $Q$, let H be the intersection point of $ON$ and the right directrix $x = \frac{1}{a}$ of the hyperbola, now, our problem is:
\begin{problem}
If $PN \parallel KF_2$, what will $e_1$, $e_2$, $\frac{ON}{OQ}$ and $\frac{OQ}{HQ}$ be?
\end{problem}
\begin{proposition}
If $PN \parallel KF_2$, then $e_1 = \frac{1}{\phi\sqrt{\phi}}, e_2 = \phi\sqrt{\phi}$ and $\frac{ON}{OQ}=\frac{OQ}{HQ}=\phi$, where $\phi=\frac{1 + \sqrt{5} }{2}$, in other words, $Q$ divides $ON$ into the golden ratio, and $H$ divides $OQ$ into the golden ratio.
\end{proposition}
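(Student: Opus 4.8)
The plan is to reduce everything to a single equation for $a$ and then recognize the golden ratio in its solution. First I would place the configuration in coordinates with $O$ at the origin and the axes along the coordinate axes, so that $F_2=(1,0)$, $N=(a,0)$, and $K=(0,b)$, where $b^2=a^2-1$ since $a^2=b^2+c^2$ with $c=1$. With this normalization the ellipse has eccentricity $e_1=c/a=1/a$, while the hyperbola (whose vertices sit at $(\pm 1,0)$ and whose foci sit at $(\pm a,0)$) has eccentricity $e_2=a/c=a$; in particular $e_1e_2=1$ holds automatically, so the whole problem is governed by the single parameter $a$, and it remains only to pin it down from the condition $PN\parallel KF_2$.

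Next I would obtain two independent expressions for $x_P=OQ$. Intersecting the two conics by eliminating $y^2$ gives $x_P^2=\dfrac{2a^2}{1+a^2}$. Separately, the parallel condition says the slope of $PN$ equals the slope $-b$ of $KF_2$; writing this out and using $y_P^2=b^2(x_P^2-1)$ from the hyperbola turns $PN\parallel KF_2$ into $\sqrt{x_P^2-1}=a-x_P$, whence (squaring, which is legitimate because $a>x_P$) $x_P=\dfrac{a^2+1}{2a}$.

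Equating the two expressions for $x_P^2$ yields, after clearing denominators, the clean relation $(a^2+1)^3=8a^4$. Setting $u=a^2$ this is the cubic $u^3-5u^2+3u+1=0$, which factors as $(u-1)(u^2-4u-1)=0$. The root $u=1$ is the degenerate case $a=c$ (a circle, $b=0$), and the root $2-\sqrt5<0$ is inadmissible, leaving $u=2+\sqrt5$. I expect this algebraic identification to be the crux: the payoff is the observation that $2+\sqrt5=2\phi+1=\phi^3$ (equivalently $\phi^3+1=2\phi^2$), so that $a^2=\phi^3$ and $a=\phi\sqrt{\phi}$. This immediately gives $e_2=a=\phi\sqrt{\phi}$ and $e_1=1/a=\dfrac{1}{\phi\sqrt{\phi}}$.

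Finally I would read off the two ratios. From $x_P=\dfrac{a^2+1}{2a}=\dfrac{2\phi^2}{2a}=\dfrac{\phi^2}{a}$ we get $\dfrac{ON}{OQ}=\dfrac{a}{x_P}=\dfrac{a^2}{\phi^2}=\dfrac{\phi^3}{\phi^2}=\phi$. Since the right directrix meets $ON$ at $H=(1/a,0)$, we have $HQ=OQ-OH=\dfrac{\phi^2}{a}-\dfrac{1}{a}=\dfrac{\phi^2-1}{a}=\dfrac{\phi}{a}$ using $\phi^2-1=\phi$, so $\dfrac{OQ}{HQ}=\dfrac{\phi^2}{\phi}=\phi$. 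Thus $Q$ divides $ON$ and $H$ divides $OQ$ in the golden ratio, as claimed. The only genuinely delicate point throughout is solving the cubic and spotting its golden-ratio root; once the factorization and the identity $\phi^3=2+\sqrt5$ are in hand, the remaining steps are routine substitutions.
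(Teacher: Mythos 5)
Your proof is correct and takes essentially the same route as the paper: both intersect the two conics to get $x_P^2=\frac{2a^2}{a^2+1}$, impose $PN\parallel KF_2$ via $y_P=b(a-x_P)$, and reduce to the same polynomial, your $(u-1)(u^2-4u-1)=0$ with $u=a^2$ being exactly the paper's $(a^2-1)(a^4-4a^2-1)=0$, then identify $a^2=2+\sqrt{5}=\phi^3$ and read off the ratios identically. Your intermediate simplification $x_P=\frac{a^2+1}{2a}$ before equating is a slightly cleaner organization of the same algebra, not a different method.
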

\begin{proof}
First, solve the equation set (\ref{1})
\begin{equation}\label{1}
\left\{
\begin{array}{rl}
\frac{x^2}{a^2} + \frac{y^2}{b^2} = 1\\
x^2 - \frac{y^2}{b^2} = 1
\end{array}
\right.
\end{equation}
to get the coordinates of $P$, we get $x_P = OQ = \sqrt{\frac{2a^2}{a^2+1}}$, $y_P = PQ = \sqrt {\frac{{({a^2} - 1){b^2}}}{{{a^2} + 1}}}$, and by $PN \parallel KF_2$, we have $\frac{PQ}{QN}=\frac{KO}{OF_2}$, and get
\begin{equation}\label{1a} 
\sqrt {\frac{{({a^2} - 1){b^2}}}{{{a^2} + 1}}} = (a - \sqrt{\frac{2a^2}{a^2+1}})b
\end{equation}
and the final form
\begin{equation}\label{2} 
(a^2-1)(a^4 - 4a^2 - 1) = 0
\end{equation}
Since $c = 1$ and $a > c$, then we obtain the unique solution $a = \phi\sqrt{\phi}$ from (\ref{2}), hence $e_1 = \frac{c}{a} = 
\frac{1}{\phi\sqrt{\phi}}$, $e_2 = \phi\sqrt{\phi}$, then $\frac{ON}{OQ} = \frac{\phi\sqrt{\phi}}{\sqrt{\phi}} = \phi$, $OH =\frac{1}{a} = \frac{1}{\phi\sqrt{\phi}}$, $HQ = OQ-OH = \frac{1}{\sqrt{\phi}}$, thus, $\frac{OQ}{HQ} = \phi$, and we also get $QN = ON-OQ = \frac{1}{\sqrt{\phi}} = HQ$, which means $Q$ is the midpoint of $HN$.
\end{proof}
Next, we show an interesting property in the ellipse which has eccentricity $\frac{1}{\phi\sqrt{\phi}}$, let $AC$ and $BD$ denote the latus rectum of the ellipse, then we have
\begin{proposition}
The rectangle $ACDB$ is made up of 4 congruent right triangles similar to the Kepler triangle\cite[p. 149]{cte2} and $\frac{{F_1}{F_2}}{AF_1}=\sqrt{\phi}$. Also, it is shown in \cite{cte3} that, $\triangle{A{F_2}C}$ is just the kind of isosceles triangle of smallest perimeter which circumscribes a semicircle.
\end{proposition}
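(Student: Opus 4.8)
The plan is to push everything in the figure through the data fixed by the previous proposition, namely $a=\phi\sqrt{\phi}$ and $c=1$, and to express each relevant length as a power of $\phi$. First I would record the basic quantities of the ellipse. Since $c=1$ and $a^{2}=\phi^{3}$, the identity $\phi^{3}=2\phi+1$ gives $b^{2}=a^{2}-c^{2}=\phi^{3}-1=2\phi$, so the semi-latus rectum is $AF_{1}=\frac{b^{2}}{a}=\frac{2\phi}{\phi\sqrt{\phi}}=\frac{2}{\sqrt{\phi}}$. Placing $O$ at the origin with the major axis along $ON$, the foci are $F_{1}=(-1,0)$ and $F_{2}=(1,0)$, while the latus-rectum endpoints are $A=(-1,\tfrac{2}{\sqrt{\phi}})$, $C=(-1,-\tfrac{2}{\sqrt{\phi}})$, $B=(1,\tfrac{2}{\sqrt{\phi}})$, $D=(1,-\tfrac{2}{\sqrt{\phi}})$. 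Thus $ACDB$ is the rectangle with horizontal side $AB=F_{1}F_{2}=2$ and vertical side $AC=\frac{4}{\sqrt{\phi}}$, and the ratio claim is immediate: $\frac{F_{1}F_{2}}{AF_{1}}=\frac{2}{2/\sqrt{\phi}}=\sqrt{\phi}$.

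For the dissection I would cut the rectangle along the horizontal midline $F_{1}F_{2}$ and along the two segments $F_{2}A$ and $F_{2}C$. Since $F_{1}$ is the midpoint of $AC$ and $F_{2}$ the midpoint of $BD$, this yields the four triangles $\triangle AF_{1}F_{2}$, $\triangle ABF_{2}$, $\triangle CF_{1}F_{2}$, $\triangle CDF_{2}$, each a right triangle whose legs are a semi-latus rectum of length $\frac{2}{\sqrt{\phi}}$ and a horizontal side of length $2$; they are pairwise congruent by construction and by the reflection symmetry across $ON$. Their legs are in ratio $2:\frac{2}{\sqrt{\phi}}=\sqrt{\phi}$, so the three sides form the geometric progression $\frac{2}{\sqrt{\phi}},\,2,\,2\sqrt{\phi}$ with common ratio $\sqrt{\phi}$, which is exactly the proportion $1:\sqrt{\phi}:\phi$ of the Kepler triangle; this proves the similarity. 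An area check ($4\cdot\frac{1}{2}\cdot 2\cdot\frac{2}{\sqrt{\phi}}$ equals the rectangle's area $\frac{8}{\sqrt{\phi}}$) confirms that the four triangles tile $ACDB$ without overlap.

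For the final assertion I would first observe that $\triangle AF_{2}C$ is automatically isosceles, since $A$ and $C$ are reflections across the major axis and $F_{2}$ lies on it, so $F_{2}A=F_{2}C=2\sqrt{\phi}$, with base $AC=\frac{4}{\sqrt{\phi}}$ and apex-to-base height $2$. The relevant inscribed semicircle is the one with diameter on $AC$ and centre the midpoint $F_{1}$, tangent to the two equal sides; computing the distance from $F_{1}$ to the line $F_{2}A$ would give in-radius $\frac{2}{\phi}$. The remaining task is to match this specific golden-proportioned triangle against the extremal shape characterized in \cite{cte3}. I expect this last matching to be the main obstacle, since it is not an internal computation but a comparison with an external optimization result; the honest route is to quote from \cite{cte3} the characterization of the minimal-perimeter isosceles triangle circumscribing a semicircle and then verify that our proportions satisfy its defining condition, rather than reprove the optimization here.
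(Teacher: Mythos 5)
Your proposal is correct and rests on exactly the same computation as the paper's (very terse) proof: $F_1F_2=2c=2$ and $AF_1=b^2/a=2/\sqrt{\phi}$, giving the ratio $\sqrt{\phi}$, with the leg ratio $\sqrt{\phi}$ forcing Kepler proportions $1:\sqrt{\phi}:\phi$. The paper leaves the explicit four-triangle dissection, the isosceles property of $\triangle AF_2C$, and the appeal to DeTemple's extremal result entirely implicit, so your added detail (including the correct tangency radius $2/\phi$ and the honest deferral of the optimization to the cited reference) simply fills in what the paper omits rather than taking a different route.
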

\begin{proof}
${F_1}{F_2} = 2$, ${AF_1} = \frac{2}{\sqrt{\phi}}$, then $\frac{{F_1}{F_2}}{AF_1}=\sqrt{\phi}$,
hence proved.
\end{proof}
Interestingly, we notice that the $\triangle{KOF_2}$ in Figure \ref{fig:fg1} is just the maximum generalized golden right triangle $T_2$\cite{cte4} which has sides $(1, \sqrt{2\phi}, \phi\sqrt{\phi})$, and we've already got an interesting construction in \cite{cte4}, next, we will show another two simple constructions of it, see Figure \ref{fig:fg32} and \ref{fig:fg33}.
\begin{figure}[ht]
\centering
\includegraphics[width=0.4\textwidth]{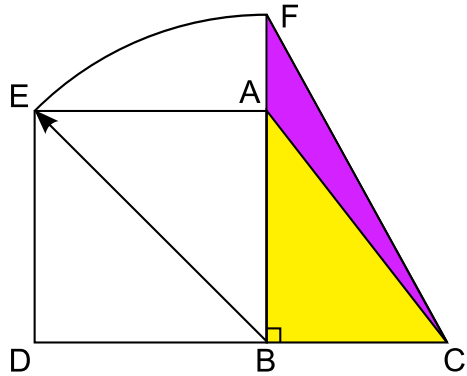}
\caption{A simple construction of $T_2$}
\label{fig:fg32}
\end{figure}
\begin{construction}\label{cons3}
A 3-step construction of $T_2$:
\begin{enumerate}[(1)]
\item construct a Kepler triangle $\triangle{ABC}$ with $BC = 1$, $AB = \sqrt{\phi}$ (see, e.g., \cite{cte5})
\item construct a square $ABDE$ externally on the side $AB$
\item draw an arc with the center at $B$ and the radius $BE$, cutting the extension of $BA$ at $F$, and join $F$ to $C$
\end{enumerate}
Then $\triangle{FBC}$ is $T_2$.
\end{construction}
\begin{proof}
$BF=BE=\sqrt{2}AB=\sqrt{2\phi}$.
\end{proof}
\begin{figure}[ht]
\centering
\includegraphics[width=0.5\textwidth]{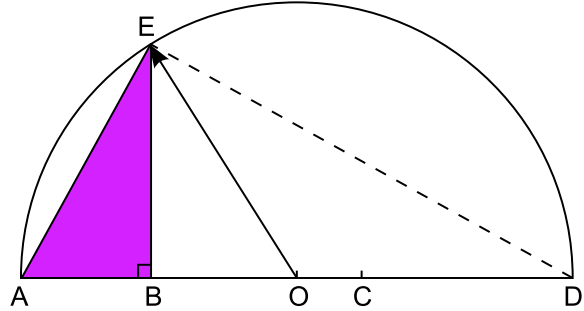}
\caption{Another simple construction of $T_2$}
\label{fig:fg33}
\end{figure}
\begin{construction}\label{cons4}
Another 3-step construction of $T_2$:
\begin{enumerate}[(1)]
\item construct a segment $AB = 1$, construct point $C$ on the extension of $AB$ that $\frac{BC}{BA}=\phi$, and extend $BC$ to $D$ that $CD=BC$
\item a semicircle is drawn with its center at the midpoint $O$ of $AD$, and the radius $OA$, naturally passing through $D$
\item through $B$, construct a perpendicular segment $BE$ to $AB$, and intersecting the semicircle at point $E$, and join $E$ to $A$
\end{enumerate}
Then $\triangle{ABE}$ is $T_2$.
\end{construction}
\begin{proof}
According to Thales' theorem, $\triangle{AED}$ is a right triangle, then we get $BE = \sqrt{{AB}\cdot{BD}} = \sqrt{2\phi}$.
\end{proof}

%%%%

%%%%

\medskip

\noindent Mathematics Subject Classification (2010).  51M04, 51M09, 51M15, 11B39

\noindent Keywords.  Conic section problem, Ellipse, Hyperbola, Golden ratio, Fibonacci numbers, Kepler triangle, Golden right triangle, Maximum generalized golden right triangle
\end{document}